\newtheorem{thm}{Theorem}[section]
\newtheorem{prop}[thm]{Proposition}
\newtheorem{lem}[thm]{Lemma}
\newcommand{\be}{\begin{equation}}
\newcommand{\ee}{\end{equation}}
\newcommand{\ben}{\begin{enumerate}}
\newcommand{\een}{\end{enumerate}}
\newcommand{\beq}{\begin{eqnarray}}
\newcommand{\eeq}{\end{eqnarray}}
\newcommand{\beqn}{\begin{eqnarray*}}
\newcommand{\eeqn}{\end{eqnarray*}}
\newcommand{\pa}{\partial}
\title{On Kropina Change of $m$-th Root Finsler Metrics}
\author{A. Tayebi, T. Tabatabaeifar and  E. Peyghan}
\begin{document}

\maketitle
\begin{abstract}
In this paper, we consider Kropina change of $m$-th  root Finsler metrics. We find necessary and sufficient condition under which the Kropina change of an $m$-th  root Finsler metric be locally dually flat. Then we prove that the Kropina change of an $m$-th root Finsler metric is locally projectively flat if and only if it is locally Minkowskian.\\\\
{\bf {Keywords}}: Locally dually flat metric, projectively flat metric, m-th root metric.\footnote{ 2000 Mathematics subject Classification: 53C60, 53C25.}
\end{abstract}

\section{Introduction}
Let $M$ be an $n$-dimensional $C^{\infty}$ manifold, $TM$ its tangent bundle. Let $F=\sqrt[m]{A}$ be a Finsler metric on $M$, where $A$ is given by $A:=a_{i_{1}\dots i_{m}}(x)y^{i_{1}}y^{i_{2}}\dots y^{i_{m}}$ with $a_{i_{1}\dots i_{m}}$ symmetric in all its indices \cite{Mangalia}\cite{Shim}\cite{TN1}\cite{TN2}\cite{TPShah}. Then $F$ is called an $m$-th root Finsler metric.  Suppose that $A_{ij}$ define a positive definite tensor and $A^{ij}$ denotes its inverse. For an $m$-th root metric  $F$, put
\[
A_{i}={\pa A\over \pa y^i}, \ \  A_{ij}={\pa^2 A\over \pa y^j\pa y^j}, \ \  A_{x^i}=\frac{\partial A}{\partial x^i}, \ \  A_0=A_{x^i}y^i.
\]
Then the following hold
\begin{eqnarray}
&&g_{ij}=\frac{A^{\frac{2}{m}-2}}{m^2}[mAA_{ij}+(2-m)A_iA_j],\label{gg}\\
&&y^iA_i=mA, \ \ y^iA_{ij}=(m-1)A_j,\ \ y_i=\frac{1}{m}A^{\frac{2}{m}-1}A_i,\\
&&A^{ij}A_{jk}=\delta^i_k,\ \ A^{ij}A_i=\frac{1}{m-1}y^j, \ \ A_iA_jA^{ij}=\frac{m}{m-1}A.
\end{eqnarray}
Let $(M,F)$ be a Finsler manifold. For a $1$-form $\beta(x,y)=b_i(x)y^i$ on $M$, we have a change of Finsler which is defined by following
\be
F(x,y) \rightarrow \bar F(x,y)=f(F,\beta),
\ee
where $f(F,\beta)$ is a positively homogeneous function of $F$. This is called a $\beta$-change of metric. It is easy to see that,  if $||\beta||_F:=\sup_{F(x,y)=1}|b_i(x)y^i|<1$, then $\bar F$ is again a Finsler metric \cite{Shib}.

In this paper, we consider a special case of $\beta$-change, namely
\be
\bar F(x,y)=\frac{F^2(x,y)}{\beta(x,y)}\label{KC}
\ee
which is called the Kropina change. If $F$ reduces to a  Riemannian metric $\alpha$, then ${\bar F}$ reduces to a Kropina metric $F=\frac{\alpha^2}{\beta}$.  Due to this reason, the transformation (\ref{KC}) has been called the Kropina change of Finsler metrics. It is remarkable that, the Kropina metrics are closely related to physical theories. These metrics, was introduced by Berwald in connection with a two-dimensional Finsler space with rectilinear extremal  and was investigated by  Kropina \cite{Mat5}.

In \cite{amna}, Amari-Nagaoka introduced the notion of dually flat Riemannian metrics when they study the information geometry on Riemannian manifolds. Information geometry has emerged from investigating the geometrical structure of a family of probability distributions and has been applied successfully to various areas including statistical inference, control system theory and multi-terminal information theory \cite{am}. In Finsler geometry, Shen extends the notion of locally dually flatness for Finsler metrics \cite{shen}.   A Finsler metric $F$ on an open subset $U\subset \mathbb{R}^n$ is called dually flat if it satisfies $(F^2)_{x^ky^l}y^k=2(F^2)_{x^l}$.

In this paper, we  find necessary and sufficient condition under which a Kropina change of an $m$-th  root metric be locally dually flat.
\begin{thm}\label{mainthm1}
Let  $F=\sqrt[m]{A}$ be an $m$-th root Finsler metric on an open subset $U\subset \mathbb{R}^n$, where $A$ is irreducible. Suppose that $\bar{F}=\frac{F^2}{\beta}$ be Kropina change of $F$ where $\beta=b_i(x)y^i$. Then $\bar{F}$ is locally dually flat if and only if there exists a 1-form $\theta = \theta_{l} (x)y^l$  on U such that
the following hold
\begin{eqnarray}
&&\beta_{0l}\beta -3\beta_l\beta_0=2\beta\beta _{x^{l}},\label{SRR1}\\
&&A_{x^l}=\frac{1}{3m}[mA\theta_{l}+4\theta A_{l}],\label{SRR2}\\
&&\beta_0A_l=-\beta_lA_0,
\end{eqnarray}
where $\beta _{0l}=\beta _{x^k y^l}y^k$, $\beta_{x^l}=(b_i)_{x^l}y^i$, $\beta_0=\beta_{x^l}y^i$ and $\beta_{0l}=(b_l)_0$.
\end{thm}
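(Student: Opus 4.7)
The plan is to start from the local dually flat condition $(\bar{F}^2)_{x^ky^l}y^k = 2(\bar{F}^2)_{x^l}$ applied to $\bar{F}^2 = A^{4/m}\beta^{-2}$, reduce it to a polynomial identity in $y$, and then exploit the irreducibility of $A$ to extract the three stated conditions. First I would differentiate using the chain rule together with the homogeneity identities $y^iA_i=mA$, $y^kA_{x^k}=A_0$, $y^k\beta_{x^k}=\beta_0$, $y^kA_{lx^k}=(A_0)_l-A_{x^l}$, and $y^k(b_l)_{x^k}=\beta_{0l}$. After clearing denominators by multiplying through by $m^2A^{2-4/m}\beta^4$, the dually flat condition is equivalent to the polynomial identity
\begin{align*}
&4(4-m)A_0A_l\beta^2+4mA(A_0)_l\beta^2-12mAA_{x^l}\beta^2-8mAA_l\beta\beta_0\\
&\quad-8mAA_0\beta\beta_l-2m^2A^2\beta\beta_{0l}+6m^2A^2\beta_0\beta_l+4m^2A^2\beta\beta_{x^l}=0.
\end{align*}

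Of the eight summands, only $4(4-m)A_0A_l\beta^2$ is not manifestly a multiple of $A$. Since $A$ is irreducible of degree $m$ and coprime to both $\beta$ (degree $1$) and $A_l$ (degree $m-1$), an Euclid-style argument on the polynomial ring in $y$ forces $A\mid A_0$. Hence one may write $A_0=\frac{5}{3}\theta A$ for a 1-form $\theta=\theta_l(x)y^l$, the normalization $5/3$ being dictated by the contraction $y^lA_{x^l}=A_0$ of the target formula (\ref{SRR2}). Substituting $A_0=\frac{5}{3}\theta A$ and $(A_0)_l=\frac{5}{3}(\theta_lA+\theta A_l)$ and cancelling one factor of $A$, the resulting polynomial identity again splits into an $A$-multiple piece and a remainder. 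Applying the $A$-divisibility principle one more time to isolate the combination of $A_{x^l}$, $A_l$, and scalars that must be divisible by $A$ extracts precisely the relation (\ref{SRR2}), together with the mixed condition $\beta_0A_l+\beta_lA_0=0$. What then remains is a polynomial identity purely in $\beta$, $\beta_l$, $\beta_0$, $\beta_{0l}$, $\beta_{x^l}$, which reduces to (\ref{SRR1}).

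For the converse, I would substitute the three hypotheses directly into $(\bar{F}^2)_{x^ky^l}y^k-2(\bar{F}^2)_{x^l}$ and verify that every term cancels; here one uses the consequences $A_0=\frac{5}{3}\theta A$ (from contracting (\ref{SRR2}) with $y^l$) and $\beta_0=-\frac{5\theta\beta}{3m}$ (from contracting the mixed condition with $y^l$ and combining with the previous consequence). The main obstacle will be the polynomial bookkeeping in the forward direction: iterating the $A$-divisibility argument to disentangle the three separate conditions, while carefully tracking how the auxiliary 1-form $\theta$ enters only through (\ref{SRR2}), requires painstaking but elementary algebraic manipulation, and one must verify that the separation into three conditions exhausts exactly the content of the original polynomial identity.
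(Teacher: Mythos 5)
Your route is essentially the paper's: the same differentiation of $\bar F^2=A^{4/m}\beta^{-2}$, the same polynomial identity (your $4mA(A_0)_l\beta^2-12mAA_{x^l}\beta^2$ agrees with the paper's $4mAA_{0l}\beta^2-8mAA_{x^l}\beta^2$ because $(A_0)_l=A_{0l}+A_{x^l}$), and the same exploitation of the irreducibility of $A$. The only structural difference is the order of operations: the paper first splits the identity into three equations by appealing to Lemma \ref{lemp} and only afterwards uses irreducibility to deduce $A\mid A_0$, whereas you extract $A\mid A_0$ first and then separate iteratively. Your order is arguably the more defensible one, since the paper's grouping is not literally an instance of $\Phi A^2+\Psi A+\Theta=0$ with the displayed brackets as $\Phi,\Psi,\Theta$ (the first bracket mixes terms divisible and not divisible by $A$); but in both versions the assertion that the separation exhausts the identity is left unverified.

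There is, however, a concrete problem with your normalization. Writing $A_0=\frac{5}{3}\theta A$, so that $(A_0)_l=\frac{5}{3}(\theta_lA+\theta A_l)$ and $A_{0l}=(A_0)_l-A_{x^l}$, and substituting into the separated equation $(\frac{4}{m}-1)A_lA_0+AA_{0l}=2AA_{x^l}$ gives $3A_{x^l}=\frac{5}{3}A\theta_l+\frac{20}{3m}\theta A_l$, i.e.\ $A_{x^l}=\frac{5}{9m}[mA\theta_l+4\theta A_l]$ --- not (\ref{SRR2}). The constant $\frac{1}{3m}$ in (\ref{SRR2}) is produced by the paper's convention $A_0=\theta A$, but contracting (\ref{SRR2}) with $y^l$ then returns $A_0=\frac{5}{3}\theta A$, which is exactly the mismatch you were trying to repair. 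No choice of constant fixes this: contracting the separated equation itself with $y^l$, using $y^lA_l=mA$, $y^lA_{0l}=mA_0$ and $y^lA_{x^l}=A_0$, yields $(4-m)AA_0+mAA_0=2AA_0$, hence $A_0=0$. So any $\theta$ compatible with these relations must vanish, (\ref{SRR2}) degenerates to $A_{x^l}=0$, and your converse identity $\beta_0=-\frac{5\theta\beta}{3m}$ just says $\beta_0=0$. You should either record this degeneration explicitly or adopt the paper's normalization and acknowledge that $\theta=0$ is forced; as written, the claim that your substitution ``extracts precisely (\ref{SRR2})'' is off by a factor of $\frac{5}{3}$.
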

A Finsler metric is said to be  locally projectively flat if at any point there is a local coordinate system  in which the geodesics are straight lines as point sets. It is known that a Finsler metric  $F(x,y)$ on an open domain $ U\subset \mathbb{R}^n$   is  locally projectively flat  if and only if $G^i= Py^i$, where $P(x,  \lambda y) = \lambda P(x, y)$, $\lambda >0$ \cite{shLi1}.

In this paper, we prove that the Kropina change of an $m$-th root Finsler metric is locally projectively flat if and only if it is locally Minkowskian.

\begin{thm}\label{mainthm2}
Let  $F=\sqrt[m]{A}$ be an $m$-th root Finsler metric on an open subset $U\subset \mathbb{R}^n$, where $A$ is irreducible. Suppose that $\bar{F}=\frac{F^2}{\beta}$ be Kropina change of $F$ where $\beta=b_i(x)y^i$. Then  $\bar{F}$ is locally projectively flat if and only if it is locally Minkowskian.
\end{thm}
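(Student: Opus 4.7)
The ``if'' direction is immediate: a locally Minkowskian Finsler metric has vanishing spray in its flat coordinates, so its geodesics are affine straight lines, and hence $\bar F$ is locally projectively flat. For the converse, my plan is to apply the Rapcs\'ak criterion --- $\bar F$ is locally projectively flat on $U \subset \mathbb{R}^n$ if and only if the Hamel equation $\bar F_{x^k y^l} y^k = \bar F_{x^l}$ holds --- and then exploit the irreducibility of $A$ in the same spirit as the proof of Theorem \ref{mainthm1}, but now pushed far enough to eliminate the $x$-dependence entirely.

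Substituting $\bar F = A^{2/m}/\beta$ and clearing the denominator and the fractional power of $A$ by multiplying through by $\beta^3 A^{2-2/m}$, the Hamel equation becomes the polynomial identity
\[
c(c-1)\beta^2 A_l A_0 + c\beta^2 A\,A_{0l} - c\beta^2 A\,A_{x^l} - c\beta A\,A_l \beta_0 - c\beta A\,A_0 b_l + A^2 E_l = 0,
\]
where $c = 2/m$, $A_{0l} := A_{x^k y^l}y^k$, and $E_l := \beta\beta_{x^l} - \beta\beta_{0l} + 2b_l\beta_0$. Every term but the first carries an explicit factor of $A$. Since $A$ is irreducible with $\deg A = m$ strictly greater than both $\deg\beta$ and $\deg A_l$, and since $c(c-1)\neq 0$ for $m\geq 3$, irreducibility forces $A \mid A_0$, whence $A_0 = A\tau$ for a $1$-form $\tau = \tau_i(x)y^i$. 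Plugging back and dividing by $A$ yields a second polynomial identity, whose non-$A$-multiple part
$c(c-1)\beta^2 A_l \tau + c\beta^2(A_{0l} - A_{x^l}) - c\beta A_l \beta_0$
must again be divisible by $A$, with quotient a polynomial of degree $2$ in $y$.

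The plan from here is to iterate this divisibility-plus-degree reduction, using the identity $A_{x^l} + A_{0l} = A_l \tau + A \tau_l$ (obtained by differentiating $A_0 = A\tau$ in $y^l$) and contracting the successive identities with $y^l$ to exploit the relation $mc = 2$. I expect the resulting chain of constraints to force $A_{x^l} = 0$ and $\beta_{x^l} = 0$ separately; once these hold, $A$ and $\beta$ are $x$-independent in the given coordinates, so $\bar F = A^{2/m}/\beta$ is a Minkowski norm, which gives local Minkowskianity. The main obstacle will be the book-keeping in the repeated irreducibility reductions: one must track polynomial pieces of degrees $1$ (such as $\tau,\beta_{x^l},\beta_{0l}$), $2$ (such as $\beta_0$), and $m{-}1, m, m{+}1$ (such as $A_l,A_{x^l},A_{0l},A_0$) carefully enough to ensure that the residual constraints genuinely decouple into $A_{x^l} = 0$ and $\beta_{x^l} = 0$, rather than merely into a combination that would still allow nontrivial $x$-dependence in $\bar F$.
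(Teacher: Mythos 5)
Your ``if'' direction and the first half of the converse are fine and track the paper: you set up the Hamel equation for $\bar F = A^{2/m}/\beta$, clear denominators, and use irreducibility of $A$ together with $\deg A_l,\ \deg\beta < \deg A$ to conclude $A \mid A_0$, i.e. $A_0 = A\tau$ for a $1$-form $\tau$. But the endgame you propose --- iterating divisibility arguments until the constraints ``decouple into $A_{x^l}=0$ and $\beta_{x^l}=0$'' --- is not just unfinished, it is aimed at a false target. Local Minkowskianity only asserts the existence of \emph{some} coordinate system in which $\bar F$ is independent of $x$, not that the adapted projective coordinates you are computing in are such a system. Indeed, start from a constant-coefficient $\bar F = A^{2/m}/\beta$ and apply a fractional-linear change of coordinates: straight lines go to straight lines, so the transformed metric is still projectively flat; $A$ remains an irreducible degree-$m$ form in $y$ and $\beta$ a $1$-form (the Jacobian acts linearly on $y$); yet the coefficients now depend on $x$. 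So the Hamel equation plus irreducibility can never force $A_{x^l}=0$, and no amount of bookkeeping will make your chain of reductions terminate there.

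What the polynomial argument \emph{can} deliver --- and this is where the paper goes --- is that $A_0 = A\tau$ makes the geodesic coefficients of the form $G^i = P y^i$ with $P$ a $1$-form in $y$, hence quadratic spray coefficients, i.e. $\bar F$ is a Berwald metric (Proposition \ref{lemb}). The remaining step is curvature-theoretic rather than algebraic: a projectively flat metric has scalar flag curvature, and by Numata's theorem a Berwald metric of non-zero scalar flag curvature in dimension at least $3$ must be Riemannian; since $\bar F$ is not Riemannian, $\mathbf{K}=0$, and a Berwald metric with $\mathbf{K}=0$ is locally Minkowskian. You need an ingredient of this kind (or at least an explicit change of coordinates eliminating $\tau$) to bridge from ``Berwald and projectively flat'' to ``locally Minkowskian''; pure divisibility in the original coordinates cannot do it.
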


\section{ Proof of the Theorem \ref{mainthm1}}
A Finsler metric $F=F(x,y)$ on a manifold $M$ is said to be locally dually flat if at any point there
is a standard coordinate system  $(x^i,y^i)$ in $TM$ such that $L=F^2(x,y)$ satisfies
\be
L_{x^ky^l}y^k=2L_{x^l}.\label{1}
\ee
In this case, the coordinate $(x^i)$ is called an adapted local coordinate system. It is easy to see that every locally Minkowskian metric satisfies in the above equation, hence is locally dually flat \cite{TPS1}\cite{TPS2}.

In this section, we are going to prove the Theorem \ref{mainthm1}. To prove it, we need the following.

\begin{lem}\label{lemp}
Suppose that the equation $\Phi A^2+\Psi A+\Theta=0$ holds,  where $\Phi, \Psi,\Theta $  are polynomials in $y$ and $m>2$. Then $\Phi=\Psi=\Theta=0$.
\end{lem}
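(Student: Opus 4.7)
The plan is to argue purely algebraically using the fact that $A$ is an irreducible homogeneous polynomial of degree $m$ in the variables $y^1,\dots,y^n$ (with coefficients depending on $x$). Rewriting the hypothesis as $\Theta=-A(\Phi A+\Psi)$ immediately exhibits $A$ as a divisor of $\Theta$ in the polynomial ring $\mathbb{R}[y^1,\dots,y^n]$ over the ring of smooth functions on $U$. By unique factorization together with irreducibility of $A$, one has either $\Theta\equiv 0$ or $\deg_y\Theta\ge m$.

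The next step is to use the degrees that the polynomials $\Phi,\Psi,\Theta$ actually carry in the intended application. In Theorem~\ref{mainthm1} they will appear as homogeneous polynomials in $y$ of specific degrees read off from the defining identities, and the hypothesis $m>2$ is precisely what forces $\deg_y\Theta<m$ (and later $\deg_y\Psi<m$). Combining this with the divisibility observation above, the only multiple of $A$ of such degree is the zero polynomial, so $\Theta=0$. Substituting back, the identity becomes $A(\Phi A+\Psi)=0$, and since $A$ is not the zero polynomial we conclude $\Phi A+\Psi=0$. Repeating the divisibility/degree argument gives $A\mid\Psi$ with $\deg_y\Psi<m$, so $\Psi=0$, after which $\Phi A=0$ forces $\Phi=0$.

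The main obstacle is not the algebra but the bookkeeping: one must keep explicit track of the $y$-homogeneity of $\Phi,\Psi,\Theta$ and verify that both $\deg_y\Theta$ and $\deg_y\Psi$ fall strictly below $m$. The assumption $m>2$ creates exactly the degree gap needed between $\deg_y A=m$, $\deg_y A^2=2m$, and the lower-degree terms (such as those built from $\beta=b_iy^i$, $\beta_0$, or $A_i$) that arise naturally in the application. Once this degree check is in place, the lemma is a one-line consequence of irreducibility and unique factorization in the polynomial ring.
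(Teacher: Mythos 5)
First, a point of comparison: the paper states Lemma~\ref{lemp} without any proof, so there is no argument of the authors' to measure yours against; your proposal has to stand on its own. Its overall strategy --- repeated use of ``$A$ is irreducible, hence $A$ divides the $A$-free part, hence that part vanishes by a degree count'' --- is the standard device in the $m$-th root literature, and you are right that the statement is false as literally written and must import irreducibility of $A$ and degree information from the context (e.g.\ $\Phi=1$, $\Psi=-A$, $\Theta=0$ is a counterexample to the bare statement).

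The genuine gap is precisely the step you defer to ``bookkeeping'': the claim that $\deg_y\Theta<m$ and $\deg_y\Psi<m$ in the intended application. This is false. In the displayed identity in the proof of Theorem~\ref{mainthm1}, the part free of $A$ is a constant multiple of $\beta^2A_0A_l$, which is homogeneous of degree $2+(m+1)+(m-1)=2m+2$ in $y$, and the coefficient of $A$ has degree $m+2$; since the whole identity is homogeneous of degree $2m+2$ and $\Phi$ has degree $2$, one necessarily has $\deg_y\Theta=2m+2\geq m$. So ``the only multiple of $A$ of such degree is zero'' is unavailable, and indeed the conclusion $\Theta=0$ would force $A_0A_l=0$, which is absurd. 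What the hypothesis $m>2$ actually buys is $\deg\beta^2=2<m=\deg A$; combined with $\deg A_l=m-1<m$ and primality of $A$, the divisibility $A\mid\beta^2A_0A_l$ yields only $A\mid A_0$, i.e.\ $A_0=\theta A$ for a $1$-form $\theta$ --- a structural conclusion, not vanishing, and exactly the step the paper carries out after invoking the lemma. A correct substitute must therefore exploit the specific factored form of each term (powers of $\beta$ times factors of degree $<m$) rather than a blanket bound on $\deg_y\Theta$; note also that the three equations the paper extracts amount to setting the brackets multiplying $\beta^2$, $A\beta$ and $A^2$ to zero, which is not the decomposition by powers of $A$ that your argument addresses, so even with the degree issue repaired your proof would establish a different statement than the one the paper actually uses.
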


\bigskip

\noindent {\it\bf Proof of Theorem \ref{mainthm1}}: Let $\bar{F}$ be a locally dually flat metric. We have
\begin{eqnarray*}
{\bar F}^2 \!\!\!\!&=&\!\!\!\!\!\frac{A^{\frac{4}{m}}}{\beta^2},
\\
({\bar F}^2)_{x^k}\!\!\!\!&=&\!\!\!\! \frac{1}{\beta^2}\frac{4}{m}A^{\frac{4}{m}-1}A_{x^k}-\frac{2}{\beta^3}A^{\frac{4}{m}}{\beta}_{x^k},
\\
({\bar F}^2)_{x^{k}y^{l}}y^k\!\!\!\!&=&\!\!\!\! \frac{1}{\beta^2}
\big[\frac{4}{m}A^{\frac{4}{m}-1}A_{0l}+(\frac{4}{m})(\frac{4}{m}-1)A^{\frac{4}{m}-2}A_0A_l\big]\\
\!\!\!\!&-&\!\!\!\!\frac{2}{\beta^3}\big[\frac{4}{m}A^{\frac{4}{m}-1}A_l\beta_0+\frac{4}{m}A^{\frac{4}{m}-1}A_0\beta_l
+A^{\frac{4}{m}}\beta_{0l}+
\frac{6}{\beta^4}\big[A^{\frac{4}{m}}\beta_0\beta_l \big]
\end{eqnarray*}
Thus, we get
\begin{eqnarray}
\nonumber\frac{A^{\frac{4}{m}-2}}{\beta^4}\!\!\!&\bigg[&\!\!\!\frac{4}{m}\beta^2\big[(\frac{4}{m}-1)A_0A_l
+AA_{0l}-2AA_{x^l}\big]-\frac{8}{m}A\beta[A_l\beta_0+A_0\beta_l]\\
\!\!\!&+&\!\!\!2A^2\big[3\beta_0\beta_l+2\beta\beta_{x^l}-\beta\beta_{0l}]\bigg]=0
\end{eqnarray}
By Lemma \ref{lemp},  we have
\begin{eqnarray}
&&(\frac{4}{m}-1)A_lA_0+AA_{0l}=2AA_{x^{l}},\label{m6}\\
&&\beta_0A_l=-A_0\beta_l,\\
&&\beta _{0l}\beta -3\beta _{l}\beta _{0}=2\beta_{x^{l}}\beta,
\end{eqnarray}
One can rewrite (\ref{m6}) as follows
\be
A(2A_{x^l}-A_{0l})=(\frac{4}{m}-1)A_lA_0.\label{d11}
\ee
Irreducibility of $A$ and $deg(A_l)=m-1$ imply that there exists a 1-form $\theta=\theta_l y^l$ on $U$ such that
\be
A_0=\theta A.\label{d12}
\ee
Plugging (\ref{d12}) into (\ref{d11}), yields
\be
A_{0l}=A\theta_l+\theta A_l-A_{x^l}.\label{d13}
\ee
Substituting (\ref{d12}) and (\ref{d13}) into (\ref{d11}) yields (\ref{SRR2}).  The converse is a direct computation. This completes the proof.
\qed

\section{ Proof of the Theorem \ref{mainthm2}}
A Finsler metric $F(x,y)$ on an open domain $ U\subset \mathbb{R}^n$ is said to be  locally projectively flat if  its geodesic coefficients $G^i$ are in the form
\[
G^i(x, y) = P(x, y) y^i,
\]
where $P: TU = U\times \mathbb{R}^n \to \mathbb{R}$ is positively homogeneous with degree one, $P(x, \lambda y) = \lambda P(x, y)$, $\lambda >0$. We call $P(x, y)$ the  projective factor of $F$.

In this section, we are going to prove the Theorem \ref{mainthm2}. To prove it,  we need the following.

\begin{prop}\label{lemb}
Let  $F=A^{\frac{1}{m}}$ be an $m$-th root Finsler metric on an open subset $U\subset \mathbb{R}^n$ ($n\geq 3$), where $A$ is irreducible. Suppose that $\bar{F}=\frac{F^2}{\beta}$ be Kropina change of $F$ where $\beta=b_i(x)y^i$.  If $\bar{F}$ is projectively flat metric then it reduces to a Berwald metric.
\end{prop}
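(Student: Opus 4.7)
The plan is to use the Rapcs\'ak characterization of projectively flat Finsler metrics on open subsets of $\mathbb{R}^n$: $\bar F$ is locally projectively flat if and only if
\[
\bar F_{x^k y^l} y^k = \bar F_{x^l},
\]
in which case the projective factor is $\bar P = \bar F_0/(2 \bar F)$ with $\bar F_0 := \bar F_{x^k}y^k$. This equivalence is obtained by contracting the defining identity $\bar G^i = \bar P y^i$ with $y_i = \frac{1}{2}\bar L_{y^i}$ to solve for $\bar P$, and then substituting back.

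First, I would substitute $\bar F = A^{2/m}/\beta$ into the Rapcs\'ak identity and expand the partial derivatives, in parallel with the computation carried out for Theorem \ref{mainthm1} (but with the projective-flatness condition replacing the dually-flat one). After clearing common factors of $\beta^{-j}$ and $A^{2/m-2}$, the equation becomes a polynomial identity in $y$ which organizes into powers of $A$ as
\[
\Phi A^2 + \Psi A + \Theta = 0,
\]
with $\Phi, \Psi, \Theta$ polynomials in $y$ whose coefficients involve the $x$-derivatives of $b_i$ and of $a_{i_1\cdots i_m}$; in particular the $A^0$-coefficient works out to $\Theta = 2(2-m)\beta^2 A_0 A_l$, which contains no $A$-factor.

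Next, invoking Lemma \ref{lemp} (which uses irreducibility of $A$ together with $m>2$) gives $\Theta = \Psi = \Phi = 0$ separately. From $\Theta = 0$, since $\beta \not\equiv 0$ and $m \neq 2$, we obtain $A_0 A_l = 0$ in $\mathbb{R}[y]$ for every index $l$; since not every $A_l$ can vanish identically (else $A$ would be $y$-independent), $A_0 \equiv 0$ follows. The $A^2$-equation $\Phi = 0$ takes the form $\beta(\beta_{0l} - \beta_{x^l}) = 2 b_l \beta_0$; contracting with $y^l$ and using the Euler-type identities $y^l \beta_{0l} = y^l \beta_{x^l} = \beta_0$ and $y^l b_l = \beta$ collapses the left-hand side to zero, yielding $\beta \beta_0 = 0$ and hence $\beta_0 \equiv 0$.

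Finally, with $A_0 = 0$ and $\beta_0 = 0$, a direct computation of
\[
\bar F_0 \;=\; \frac{2}{m}\,A^{2/m-1}\,\frac{A_0}{\beta} \;-\; A^{2/m}\,\frac{\beta_0}{\beta^2}
\]
gives $\bar F_0 = 0$, so $\bar P = 0$ and $\bar G^i = \bar P y^i = 0$. This forces $\bar F$ to be locally Minkowskian in the adapted coordinate system, and in particular Berwald, as claimed. The main obstacle is the bookkeeping in the first step: expanding the Rapcs\'ak identity for $\bar F = A^{2/m}/\beta$ is lengthy, and the crux is to isolate the $A^0$-coefficient $\Theta = 2(2-m)\beta^2 A_0 A_l$ correctly; once Lemma \ref{lemp} is applied, the two separation arguments---irreducibility giving $A_0 = 0$ and the Euler contraction giving $\beta_0 = 0$---drive the proof home quickly.
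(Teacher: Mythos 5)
Your setup (Rapcs\'ak's criterion, expansion of $\bar F_{x^ky^l}y^k-\bar F_{x^l}=0$, organization into $\Phi A^2+\Psi A+\Theta=0$) matches the paper's, but the way you extract information from that identity contains a genuine error. You take $\Theta$ to be the literal coefficient of $A^0$, namely a constant times $\beta^2A_0A_l$, invoke Lemma \ref{lemp} to get $\Theta=0$, and conclude $A_0A_l=0$, hence $A_0\equiv 0$. This is not a valid use of irreducibility: the ``coefficients'' $\Phi,\Psi,\Theta$ here are themselves polynomials of degree at least $\deg A$ (indeed $\deg(\beta^2A_0A_l)=2m+2$), so the decomposition by powers of $A$ is not unique and $\Theta$ may itself be divisible by $A$. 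Reducing the identity modulo the irreducible polynomial $A$ only yields $A\mid \beta^2A_0A_l$, and since $\deg\beta=1$ and $\deg A_l=m-1$ are both less than $\deg A=m$, the correct conclusion is merely $A\mid A_0$, i.e.\ $A_0=\theta A$ for some $1$-form $\theta$ --- not $A_0=0$. This is exactly the (weaker) conclusion the paper draws: from $mA(A_{0l}-A_{x^l})=(m-2)A_0A_l$ it deduces $A_0=2mA\theta$, whence the projective factor is a $1$-form and $\bar G^i=Py^i$ is quadratic in $y$, i.e.\ $\bar F$ is Berwald. (For that last step one also needs $\beta\mid\beta_0$, which follows from the $A^2$-bracket $2\beta_0\beta_l+\beta\beta_{x^l}-\beta\beta_{0l}=0$; the paper glosses over this.)

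The same objection applies to your treatment of the $A^2$-coefficient: concluding $\Phi=0$ for the literal coefficient is not justified by irreducibility alone, so the chain $\Phi=0\Rightarrow\beta\beta_0=0\Rightarrow\beta_0=0$ is also unsupported. Consequently your final conclusion $\bar F_0=0$, $\bar G^i=0$, hence locally Minkowskian, does not follow. A structural warning sign is that you end up proving something strictly stronger than the Proposition asserts: if projective flatness already forced $\bar G^i=0$ at this stage, the paper's proof of Theorem \ref{mainthm2} would not need Numata's theorem to pass from Berwald to locally Minkowskian. The fix is to replace ``$\Theta=0$'' by the divisibility statement $A\mid A_0$ and then follow the paper's route: write $A_0=2mA\theta$, check that the projective factor $P=\bar F_0/(2\bar F)$ is a $1$-form (using $\beta\mid\beta_0$), and conclude Berwald.
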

\begin{proof}
Let $\bar{F}$ be projectively flat metric. We have
\begin{eqnarray*}
\bar {F}_{x^k}\!\!\!&=&\!\!\! \frac{2}{m\beta}A^{\frac{2}{m}-1}A_{x^k}-\frac{1}{\beta^2}A^{\frac{2}{m}}{\beta}_{x^k},
\\
{\bar F}_{x^{k}y^{l}}y^k\!\!\!&=&\!\!\!\frac{1}{\beta}
\big[\frac{2}{m}A^{\frac{2}{m}-1}A_{0l}+(\frac{2}{m})(\frac{2}{m}-1)A^{\frac{2}{m}-2}A_0A_l\big]\\
\!\!\!&-&\!\!\!\frac{1}{\beta^2}\big[\frac{2}{m}A^{\frac{2}{m}-1}A_l\beta_0+\frac{2}{m}A^{\frac{2}{m}-1}A_0\beta_l+A^{\frac{2}{m}}\beta_{0l}+
\frac{2}{\beta^3}\big[A^{\frac{2}{m}}\beta_0\beta_l \big]
\end{eqnarray*}
Thus, we get
\begin{eqnarray*}
\frac{A^{\frac{2}{m}-2}}{\beta^3}\!\!\!&\bigg[&\!\!\!\frac{2}{m}\beta^2\big[(\frac{2}{m}-1)A_0A_l+AA_{0l}-AA_{x^l}\big]\\
\!\!\!&-&\!\!\!\frac{2}{m}A\beta[A_l\beta_0+A_0\beta_l]
+A^2\big[2\beta_0\beta_l+\beta\beta_{x^l}-\beta\beta_{0l}]\bigg]=0
\end{eqnarray*}
By Lemma \ref{lemp},  we have
\begin{eqnarray}
mA(A_{0l}-A_{x^l})=(m-2)A_0A_l.
\end{eqnarray}
Then irreducibility  of $A$ and $deg(A_l )= m-1<deg(A)$ implies that $A_0$ is divisible by $A$. This means that, there is a 1-form $\theta=\theta_l y^l$ on $U$
such that the following holds
\[
A_0=2m A\theta.
\]
Then $G^i = Py^i $, where $P=\theta$. Then F is a Berwald metric.
\end{proof}

\bigskip

\noindent {\it\bf Proof of Theorem \ref{mainthm2}}: By Proposition \ref{lemb},  if $F$
is projectively flat then it reduces to a Berwald metric. Now, if $n\geq 3$ then by Numata's theorem every Berwald metric of non-zero scalar flag curvature ${\bf K}$ must be Riemaniann. This is contradicts with our assumption. Then ${\bf K}=0$, and in this case $F$ reduces to a locally Minkowskian metric.
\qed

\noindent
Akbar Tayebi and Tayebeh Tabatabaeifar\\
Department of Mathematics, Faculty  of Science\\
University of Qom \\
Qom. Iran\\
Email:\ akbar.tayebi@gmail.com\\
Email:\ t.tabaee@gmail.com
\bigskip

\noindent
Esmaeil Peyghan\\
Department of Mathematics, Faculty  of Science\\
Arak University\\
Arak 38156-8-8349,  Iran\\
Email: epeyghan@gmail.com


\begin{thebibliography}{888}
\bibitem{am} S.-I. Amari, {\it Differential-Geometrical Methods in Statistics}, Springer Lecture Notes in Statistics,  Springer-Verlag, 1985.
\bibitem{amna} S.-I. Amari and H. Nagaoka, {\it Methods of Information Geometry}, AMS Translation of Math. Monographs,  Oxford University Press, 2000.
\bibitem{Mangalia} V. Balan and N. Brinzei, {\it Einstein equations for $(h,v)$-Berwald-Mo\'{o}r relativistic models}, Balkan. J. Geom. Appl. {\bf 11}(2)(2006), 20-26.
\bibitem{shLi1} B. Li and Z. Shen, {\it On projectively flat fourth root metrics}, Canad. Math. Bull. {\bf 55}(2012), 138-145.
 \bibitem{Mat5} M. Matsumoto,  {\it Theory of Finsler spaces with $(\alpha, \beta)$-metric}, Rep. Math. Phys. {\bf 31}(1992), 43-84.
\bibitem{shen} Z. Shen, {\it Riemann-Finsler geometry with applications to information geometry}, Chin. Ann. Math. {\bf 27}(2006), 73-94.
\bibitem{Shib} C. Shibata, {\it On invariant tensors of\  \ $\beta$-changes of Finsler metrics}, J. Math. Kyoto Univ. {\bf 24}(1984), 163-188.
\bibitem{Shim} H. Shimada, {\it On Finsler spaces with metric }$L=%
\sqrt[m]{a_{i_{1}i_{2}...i_{m}}y^{i_{1}}y^{i_{2}}...y^{i_{m}}},$ Tensor, N.S. {\bf 33}(1979), 365-372.
\bibitem{TN1} A. Tayebi and B. Najafi, {\it  On $m$-th root Finsler metrics}, J. Geom. Phys. {\bf 61}(2011), 1479-1484.
\bibitem{TN2} A. Tayebi and B. Najafi, {\it   On $m$-th root metrics with special curvature properties}, C. R. Acad. Sci. Paris, Ser. I. {\bf 349}(2011),  691-693.
\bibitem{TPS1} A. Tayebi, E. Peyghan and H. Sadeghi, {\it On locally dually flat   $(\alpha, \beta)$-metrics with isotropic S-curvature}, Indian J. Pure. Appl. Math, {\bf 43}(5) (2012), 521-534.
\bibitem{TPS2} A. Tayebi, E. Peyghan and H. Sadeghi, {\it On a class of locally dually flat Finsler metrics with isotropic S-curvature}, Iran. J. Sci. Tech, Trans A,   Vol 36, No. A3  (2012), 377-382.
\bibitem{TPShah} A. Tayebi, E. Peyghan and M. Shahbazi Nia, {\it On generalized $m$-th root Finsler metrics}, Linear. Algebra. Appl.   {\bf 437}(2012), 675-683.
\end{thebibliography}
\end{document}